\documentclass{amsart}
\usepackage{amssymb,amsthm,amstext,amsmath,amscd,latexsym,amsfonts}
\usepackage[all]{xy}
	
\newtheorem{theorem}{Theorem}[section]
\newtheorem{lemma}[theorem]{Lemma}

\newtheorem{proposition}[theorem]{Proposition}
\theoremstyle{definition}
\newtheorem{definition}[theorem]{Definition}

\theoremstyle{remark}
\newtheorem{remark}[theorem]{Remark}
\numberwithin{equation}{section} 
\def\ulL{{\underline{L}}}
\def\ulM{{\underline{M}}}
\def\ulN{{\underline{N}}}
\def\ulZ{{\underline{Z}}}
\def\ulX{{\underline{X}}}
\def\ulY{{\underline{Y}}}
\def\ulW{{\underline{W}}}
\def\ulU{{\underline{U}}}

\def\ulf{{\underline{f}}}
\def\ulg{{\underline{g}}}
\def\ulh{{\underline{h}}}
\def\ulw{{\underline{w}}}

\def\SCMR{{\underline{\C}}}
\def\SHom{{\underline{\mathrm{Hom}}}}
\def\SHomR{{\underline{\mathrm{Hom}}_R}}

\def\modR{\mathrm{mod} (R)}

\def\CMR{\C}
\def\ind {\mathrm{ind}}

\def\GR{\mathrm{G}(\CMR )}

\def\EXR{\mathrm{EX}(\CMR )}
\def\ARR{\mathrm{AR}(\C )}

\def\Hom{\mathrm{Hom}}
\def\HomR{\mathrm{Hom}_R}

\def\Ext{\mathrm{Ext}}
\def\Tor{\mathrm{Tor}}
\def\Im{\mathrm{Im}}
\def\tr{\mathrm{tr}}

\def\Z{\mathbb Z}
\def\m{\mathfrak m}
\def\p{\mathfrak p}

\def\C{\mathcal C}

\begin{document}
\title[Relations for Grothendieck groups of Gorenstein rings]{Relations for Grothendieck groups of Gorenstein rings}
\author{Naoya Hiramatsu}
\address{Department of general education, Kure National College of Technology, 2-2-11, Agaminami, Kure Hiroshima, 737-8506 Japan}
\email{hiramatsu@kure-nct.ac.jp}
\thanks{This work was partly supported by JSPS Grant-in-Aid for Young Scientists (B) 15K17527. }
\subjclass[2000]{Primary 13D15 ; Secondary 16G50, 16G60}
\date{\today}
\keywords{Grothendieck group, finite representation type, AR sequence}

\maketitle
\begin{abstract}
We consider the converse of the Butler, Auslander-Reiten's Theorem which is on the relations for Grothendieck groups. 
We show that a Gorenstein ring is of finite representation type if the Auslander-Reiten sequences generate the relations for Grothendieck groups. 
This gives an affirmative answer of the conjecture due to Auslander.     
\end{abstract}
\section{Introduction}
Throughout this section, $(R, \m )$ denote a commutative Cohen-Macaulay complete ring with the residue field $k$. 
All $R$-modules are assumed to be finitely generated. 
We say that an $R$-module $M$ is Cohen-Macaulay if
$$
\Ext _{R}^{i}(k, M) = 0 \quad \text{for all } i < \dim R. 
$$
We denote by $\modR$ the category of $R$-modules with $R$-homomorphisms and by $\C$ the full subcategory of $\modR$ consisting of all Cohen-Macaulay $R$-modules.  

Let $K_0 (\C )$ be a Grothendieck group of $\C$. 
Since $K_0 (\CMR) = K_0 (\modR )$, it is important to investigate $K_0 (\C )$ for the study of K-theory of $\modR$. 
Set $\GR =\bigoplus _{X \in \ind \C} \ \Z \cdot [X]$, which is a free abelian group generated by isomorphism classes of indecomposable objects in $\CMR$. 
We denote by $\EXR$ a subgroup of $\GR$ generated by
$$
\{  [X] + [Z] - [Y] \ | \text{there is an exact sequence }0 \to Z \to Y \to X \to 0 \text{ in } \CMR \} .
$$ 
We also denote by $\ARR$ a subgroup of $\GR$ generated by 
$$
\{ [X] + [Z] - [Y] \ | \text{there is an AR sequence }0 \to Z \to Y \to X \to 0 \text{ in } \CMR \} . 
$$
On the relation for Grothendieck groups, Butler\cite{B81}, Auslander-Reiten\cite{AR86}, and Yoshino\cite{Y90} prove the following theorem.

\begin{theorem}\cite{B81, AR86, Y90}\label{BARY}
If $R$ is of finite representation type then $\EXR = \ARR$. 
\end{theorem}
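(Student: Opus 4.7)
The inclusion $\ARR \subseteq \EXR$ is immediate: every AR sequence is a short exact sequence, so each generator of $\ARR$ is already a generator of $\EXR$. The substance is the reverse inclusion $\EXR \subseteq \ARR$, and the plan is to show that every generating relation $[X]+[Z]-[Y]$ of $\EXR$ coming from a short exact sequence $0 \to Z \to Y \to X \to 0$ in $\C$ can be expressed as a $\Z$-linear combination of AR relations.

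I would first perform standard reductions. Split sequences give the zero relation, so one may assume the sequence is non-split. If $X \cong X' \oplus X''$, then pulling back along the two summand inclusions decomposes the original relation into two relations whose right-hand terms are the respective summands, so one may assume $X$ is indecomposable. If $X$ is free, then the sequence splits; hence one may further assume $X$ is indecomposable non-free.

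Next, compare with the AR sequence $0 \to \tau X \to E \to X \to 0$. Since $Y \to X$ is not a split epimorphism, the right-almost-split property of $E \to X$ produces a morphism $Y \to E$ over $X$, and hence a morphism of short exact sequences
\[
\begin{CD}
Z @>>> Y @>>> X \\
@VVV @VVV @| \\
\tau X @>>> E @>>> X.
\end{CD}
\]
The standard mapping-cone construction then yields a new short exact sequence $0 \to Z \to Y \oplus \tau X \to E \to 0$ whose associated relation $r_{\mathrm{new}} = [E]+[Z]-[Y]-[\tau X]$ satisfies $r_{\mathrm{new}} + \bigl([X]+[\tau X]-[E]\bigr) = [X]+[Z]-[Y]$. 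Since the AR relation $[X]+[\tau X]-[E]$ already lies in $\ARR$, the problem reduces to showing $r_{\mathrm{new}} \in \ARR$. Decomposing $E = E_1 \oplus \cdots \oplus E_m$ into indecomposables and splitting the new sequence along these summands produces $m$ relations, each with indecomposable right-hand term one of the $E_i$, which are precisely the indecomposable predecessors of $X$ in the AR quiver.

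The heart of the proof — and the main obstacle — is to close this argument by a terminating induction. Finite representation type ensures that the AR quiver is finite, and the challenge is to choose a well-founded complexity measure on exact sequences that strictly decreases when one passes from a sequence with indecomposable right-hand term $X$ to the finitely many new sequences with right-hand terms the summands $E_i$. A natural candidate is a lexicographic weight derived from a rank function on the AR quiver (for example, distance to the projectives, or a more refined vector of multiplicities of each indecomposable in a fixed enumeration), or an auxiliary length invariant on the middle term. Verifying that such a measure is compatible with the reduction and ruling out nontermination is exactly where the finiteness assumption is essential, and it is the delicate step that distinguishes the finite representation type case from the general situation.
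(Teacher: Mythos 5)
Your overall strategy --- reduce to a non-split sequence with indecomposable non-free end term, factor the epimorphism through the right almost split map $E \to X$, and pass to the mapping-cone sequence $0 \to Z \to Y \oplus \tau X \to E \to 0$, whose relation differs from the original one by the AR relation at $X$ --- is exactly the classical route of Butler, Auslander--Reiten and Yoshino (the paper itself offers no proof of Theorem~\ref{BARY}; it only cites these sources). But, as you yourself concede, the argument is not closed: you never exhibit a well-founded invariant that strictly decreases under the reduction, and the candidates you float (distance to the projectives, a lexicographic rank on the AR quiver) do not work. The AR quiver of $\C$ routinely contains oriented cycles (already over hypersurfaces, where $\tau \cong \Omega$ and $\Omega^2 \cong \mathrm{id}$ on the stable category), so there is no ``rank toward the projectives,'' and iterating your reduction could a priori cycle forever. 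The invariant that actually makes the induction terminate is the total $\Hom$-defect of the epimorphism,
\[
c(p) \;=\; \sum_{U \in \ind \C} \mathrm{length}_R\, \mathrm{Coker}\bigl( \HomR (U, Y) \xrightarrow{\ p_{*}\ } \HomR (U, X) \bigr),
\]
a non-negative integer precisely because of finite representation type: the sum has finitely many terms, and each term is finite because finite CM type forces $R$ to be an isolated singularity, so each cokernel embeds into the finite-length module $\Ext _R^1(U, Z)$. One then checks that $c$ is additive under the decomposition into indecomposable end terms, that $c(p)=0$ exactly when $p$ splits, and that the mapping-cone step strictly decreases $c$ (the term at $U=X$ drops, since $\mathrm{id}_X$ lifts through $E \to X$ after the reduction but did not lift through $p$). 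Supplying this invariant and verifying its strict descent is the genuinely missing content, and it is precisely where the hypothesis of finite representation type enters; see \cite[Ch.~13]{Y90} or \cite{AR86}, where the conclusion is the explicit formula $[X]+[Z]-[Y]=\sum_{M} c_M(p)\,([M]+[\tau M]-[E_M])$ with $c_M(p)$ the defect at $M$.

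A second, smaller but real error: your reduction to indecomposable right-hand term is misstated. If $X = X_1 \oplus X_2$ and $Y_i$ denotes the pullback of $Y \to X \hookleftarrow X_i$, then $[X]+[Z]-[Y]$ is \emph{not} the sum of the relations $[X_i]+[Z]-[Y_i]$ in the free group $\GR$: the identity $[Y_1]+[Y_2]=[Y]+[Z]$ holds only after passing to the Grothendieck group, which is circular here. The correct decomposition uses the chain of sequences $0 \to Z \to Y_1 \to X_1 \to 0$ and $0 \to Y_1 \to Y \to X_2 \to 0$, giving $[X]+[Z]-[Y] = ([X_1]+[Z]-[Y_1]) + ([X_2]+[Y_1]-[Y])$; the same care is needed when you ``split the new sequence along the summands $E_i$,'' and one must also check that the defect $c$ is additive over this chain. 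Both issues are repairable by standard arguments, but as written the proof is incomplete at its central step.
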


Here we say that $R$ is of finite representation type if there are only a finite number of isomorphism classes of indecomposable Cohen-Macaulay $R$-modules.   

In this note we consider the converse of Theorem \ref{BARY}. 
Actually we shall show the following theorem. 
 
\begin{theorem}\label{Main theorem}
Let $R$ be a complete Gorenstein local ring with an isolated singularity and with algebraically closed residue field. 
If $\EXR = \ARR$, then $R$ is of finite representation type.  
\end{theorem}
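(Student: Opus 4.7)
The strategy is to adapt Auslander's classical argument for Artin algebras to the Gorenstein isolated-singularity setting, by exploiting that the stable category $\ulC$ is a Hom-finite Krull-Schmidt triangulated category in which every non-free indecomposable $X$ admits an AR triangle $\tau X \to E_X \to X \to \tau X[1]$. The algebraically closed residue field ensures $\SEnd(X)/\mathrm{rad}\,\SEnd(X) \cong k$. I would denote by $r_X = [\tau X] + [X] - [E_X] \in \GR$ the AR relation, so that $\ARR = \sum_X \Z \cdot r_X$ with $X$ ranging over non-free indecomposables; the hypothesis $\EXR = \ARR$ then says that for every short exact sequence $0 \to A \to B \to C \to 0$ in $\C$, the class $[A] + [C] - [B]$ is a \emph{finite} integer combination of the $r_X$'s.

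The key tool is a family of ``AR-dual'' $\Z$-valued functionals $\{\chi_Y\}$ on $\GR$, one for each non-free indecomposable $Y$. I would take $\chi_Y([M])$ to be an appropriate length invariant of $\SHomR(Y,M)$ (well-defined by the isolated-singularity hypothesis, which forces $\SHomR(Y,M)$ to be of finite length over $R$). The right almost split property of the map $E_X \to X$ yields the cokernel computation $\SHomR(Y,X)/\underline{\mathrm{rad}}(Y,X) \cong k \cdot \delta_{XY}$, from which a short Euler-characteristic argument gives the duality $\chi_Y(r_X) = \delta_{XY}$. In particular, whenever $\alpha \in \ARR$ is written as $\sum n_X r_X$, the coefficients are recovered as $n_X = \chi_X(\alpha)$; hence membership in $\ARR$ forces $\chi_X(\alpha) = 0$ for all but finitely many non-free indecomposables $X$.

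With this setup in hand, the hypothesis $\EXR = \ARR$ translates into the concrete statement: for every short exact sequence $0 \to A \to B \to C \to 0$ in $\C$, the value $\chi_X([A]+[C]-[B])$, which by the long exact sequence equals a length invariant of the image of the connecting map $\SHomR(X,C) \to \underline{\Ext}_R^1(X,A)$, is non-zero for only finitely many $X$. To contradict infinite CM type I would exhibit a single exact sequence violating this finiteness. The natural candidate is the minimal presentation $0 \to \Omega M \to F \to M \to 0$ of a judiciously chosen non-free indecomposable $M$; via the AR duality for Gorenstein isolated singularities, non-vanishing of $\chi_Y$ on this class for infinitely many $Y$ becomes the statement that $\SHomR(M,Y) \neq 0$ for infinitely many indecomposable $Y$.

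The main obstacle, and indeed the heart of the proof, is producing such a witness $M$ under the assumption of infinite CM type. Here I would combine the structure of the AR quiver---which, under the Gorenstein and isolated-singularity hypotheses, is locally finite and carries a well-defined $\tau$-action---with a Harada-Sai / Brauer-Thrall-type argument to show that infinitely many indecomposables $Y$ must have non-zero stable homomorphisms to a suitably chosen $M$ whenever the total number of indecomposables is infinite. All other steps---the duality $\chi_Y(r_X) = \delta_{XY}$, the length computation on the connecting map, and the translation of the hypothesis to a finite-support condition on $\chi_X$---are formal consequences of AR theory once the correct functional framework is fixed.
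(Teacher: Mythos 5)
Your overall framework is the same as the paper's: apply the length-of-stable-Hom functionals $[\ulU,-]$ to the hypothesized identity $\EXR=\ARR$, use the AR-triangle formula (in the paper this is the Webb--Hiramatsu proposition, which gives $[\ulU,\ulX]+[\ulU,\ulZ]-[\ulU,\ulY]=\mu(\ulU,\ulX)+\mu(\ulU,\ulX[-1])$ rather than your clean $\chi_Y(r_X)=\delta_{XY}$, but with the same finite-support consequence) to conclude that any element of $\ARR$ is annihilated by all but finitely many of these functionals, and then contradict this with a single syzygy sequence $0\to\Omega M\to F\to M\to 0$ whose class is detected by infinitely many of them. Up to a direction slip (the functional $\chi_Y$ evaluated on that sequence detects $\SHomR(Y,M\oplus\Omega M)$, not $\SHomR(M,Y)$), this part is fine and matches the paper.

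The genuine gap is exactly where you place ``the heart of the proof'': you never produce the witness $M$. Your proposed route --- a Harada--Sai / Brauer--Thrall-type argument on the AR quiver --- is not carried out, and it is not the right tool here: Harada--Sai controls chains of non-isomorphisms between indecomposables of \emph{bounded} length, whereas over a positive-dimensional ring of infinite CM type the indecomposable Cohen--Macaulay modules have unbounded rank, and Brauer--Thrall-type statements for Cohen--Macaulay modules are themselves difficult theorems requiring extra hypotheses. The paper's solution is a short direct construction: take $X$ to be a Cohen--Macaulay approximation of the residue field, $0\to Y\to X\to k\to 0$ with $Y$ of finite injective dimension. Since $\Ext^1_R(M,Y)=0$ for $M\in\CMR$ over a Gorenstein ring, the induced map $\SHomR(M,X)\to\SHomR(M,k)$ is surjective, and $\SHomR(M,k)\cong\Tor_1^R(\tr M,k)$ vanishes only when $\tr M$, hence $M$, is free. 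Thus this single $X$ satisfies $\SHomR(M,X)\neq 0$ for \emph{every} non-free $M$, and applying the functionals $[\ulU,-]$ to the relation coming from $0\to\Omega X\to P\to X\to 0$ finishes the proof. Without this (or some substitute you actually prove), your argument does not close.
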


Auslander conjectured the converse of Theorem \ref{BARY} is true. 
It has been proved by Auslander\cite{A84} for Artin algebras and by Auslander-Reiten\cite{AR86} for complete one dimensional domain. 
Our theorem gives an affirmative answer to his conjecture for the case of complete Gorenstein local rings with an isolated singularity. 

\section{Proof of Theorem \ref{Main theorem}}\label{result}

In the rest of the note, we always assume that $(R, \m)$ is a complete Gorenstein local ring  with the residue field $k$, where $k$ is an algebraically closed field. 
For the category of Cohen-Macaulay $R$-modules $\CMR$, we denote by $\SCMR$ the stable category of $\CMR$. 
The objects of $\SCMR$ are the same as those of $\CMR$, and the morphisms of $\SCMR$ are elements of $\SHom _R(M, N) = \Hom _R(M, N)/P(M, N)$ for $M, N \in \SCMR$, where $P(M, N)$ denote the set of morphisms from $M$ to $N$ factoring through free $R$-modules. 
Since $R$ is complete, $\CMR$, hence $\SCMR$, is a Krull-Schmidt category. 
For $M \in \CMR$ we denote it by $\ulM$ to indicate that it is an object of $\SCMR$. 
For a finitely generated $R$-module $M$, take a free presentation 
$$
\cdots \rightarrow F_1 \xrightarrow{d} F_0 \rightarrow M \rightarrow 0.
$$
We denote $\Im \ d$ by $\Omega M$, which is called a (first) syzygy of $M$.  
And we also denote by $\tr M$ the cokernel $F_0 ^{\ast}\xrightarrow{d^{\ast}}F_1^{\ast}$ where $(-)^{\ast} = \HomR (-, R)$.

First of all we prepare a key lemma.

\begin{lemma}\label{key}
There exists $X \in \CMR$ such that $\SHomR (M, X) \not= 0$ for all $M \in \CMR$ with $\ulM \not= \underline{0}$ in $\SCMR$. 
\end{lemma}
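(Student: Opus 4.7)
The plan is to take $X = (\Omega^d k)^*$, where $d = \dim R$ and $(-)^* = \HomR(-,R)$. Since $R$ is Gorenstein of dimension $d$, the syzygy $\Omega^d k$ is Cohen-Macaulay, and $(-)^*$ is an exact duality on $\CMR$ that descends to $\SCMR$. Combined with the tensor-Hom adjunction this yields a natural isomorphism
$$
\SHomR(M, X) \;\cong\; \SHomR(\Omega^d k, M^*),
$$
and since $M^*$ is non-free precisely when $M$ is, the problem reduces to showing that $\SHomR(\Omega^d k, N) \neq 0$ for every non-free Cohen-Macaulay $R$-module $N$.

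To analyze $\SHomR(\Omega^d k, N)$ I would cut by a regular sequence $\underline{x} = x_1, \ldots, x_d \in \m$ that is regular on both $R$ and $N$ (such a sequence exists since $N$ has depth $d$). Setting $\bar R = R/(\underline{x})$, an Artinian Gorenstein local ring with one-dimensional socle $k\bar s$, and $\bar N = N/(\underline{x}) N$, which is nonzero by Nakayama, the standard identifications $\Ext^d_R(k, N) \cong \operatorname{Soc}(\bar N)$ and $\Ext^d_R(k, R) \cong \operatorname{Soc}(\bar R) = k\bar s$ combined with the canonical surjection $\HomR(\Omega^d k, N) \twoheadrightarrow \Ext^d_R(k, N)$ should yield
$$
\SHomR(\Omega^d k, N) \;\cong\; \operatorname{Soc}(\bar N)\,\big/\,\bar s \bar N.
$$
The main step is identifying the image in $\Ext^d_R(k, N)$ of the maps factoring through free modules with $\bar s \bar N$: if $\phi \colon \Omega^d k \to N$ factors as $\Omega^d k \xrightarrow{g} R^m \xrightarrow{\psi} N$, then by naturality the class of $\phi$ equals $\psi_*$ applied to the class of $g$ in $\Ext^d_R(k, R^m) \cong (k\bar s)^m$, which lies in $\bar s \bar N$; the reverse inclusion is immediate from the surjectivity of $\HomR(\Omega^d k, R) \twoheadrightarrow \Ext^d_R(k, R)$.

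To conclude, since $\underline{x}$ is regular on the Cohen-Macaulay module $N$, a standard Nakayama lifting argument gives that $\bar N$ is $\bar R$-free if and only if $N$ is $R$-free, so $\bar N$ is non-free. Taking a minimal free cover $F \twoheadrightarrow \bar N$ with kernel $K \subseteq \m F$, the long exact sequence for $\Hom_{\bar R}(k, -)$ identifies $\operatorname{Soc}(\bar N)/\bar s \bar N \cong \Ext^1_{\bar R}(k, K)$. The vanishing $\Ext^1_{\bar R}(k, K) = 0$ would force $K$ to be $\bar R$-injective (by Bass's criterion applied to the Artinian local ring $\bar R$), hence $\bar R$-free by self-injectivity of $\bar R$; but $\bar s$ annihilates $\m F$ while acting nontrivially on any nonzero free $\bar R$-module, so $K \subseteq \m F$ contains no nonzero free submodule. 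In the vanishing case this forces $K = 0$, making $\bar N$ free and contradicting non-freeness. The main obstacle is therefore the verification of the isomorphism for $\SHomR(\Omega^d k, N)$ above, since the subsequent Artinian Gorenstein reduction is fairly standard once this formula is in place.
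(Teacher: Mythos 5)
Your argument is correct, but it takes a genuinely different route from the paper's. The paper chooses for $X$ a Cohen--Macaulay approximation $0 \to Y \to X \to k \to 0$ of the residue field: since $Y$ has finite injective dimension and $M$ is maximal Cohen--Macaulay, $\HomR(M,X) \to \HomR(M,k)$ is surjective, hence so is $\SHomR(M,X) \to \SHomR(M,k)$, and the target equals $\Tor_1^R(\tr M, k)$, which vanishes only when $M$ is free. Your choice $X=(\Omega^d k)^{\ast}$, together with the dualization $\SHomR(M,X)\cong\SHomR(\Omega^d k, M^{\ast})$, replaces these two external inputs (existence of CM approximations and the formula $\SHomR(M,k)\cong\Tor_1^R(\tr M,k)$) by a direct computation, and the computation does go through: the kernel of the canonical surjection $\HomR(\Omega^d k,N)\twoheadrightarrow\Ext^d_R(k,N)$ consists of maps that extend to $F_{d-1}$ and hence lies in $P(\Omega^d k,N)$, so $\SHomR(\Omega^d k,N)$ is the quotient of $\Ext^d_R(k,N)\cong\operatorname{Soc}(\bar N)$ by the image of $P(\Omega^d k,N)$, and your two inclusions correctly identify that image with $\bar s\bar N$ (naturality of the Rees isomorphism in the second variable is what makes the square with $\psi_{\ast}$ commute). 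The concluding Artinian step is also sound: over the zero-dimensional local ring $\bar R$ the single vanishing $\Ext^1_{\bar R}(k,K)=0$ does force $K$ injective, hence free, which is incompatible with $K\subseteq\m F$ unless $K=0$. What the paper's route buys is brevity, delegating all the work to quotable facts; what yours buys is self-containedness and an explicit formula $\SHomR(M,X)\cong\operatorname{Soc}(\overline{M^{\ast}})/\bar s\,\overline{M^{\ast}}$, at the cost of the reduction-to-Artinian bookkeeping. Both arguments use the Gorenstein hypothesis in an essential way (the paper to get $\Ext^1_R(M,Y)=0$; you for the duality $(-)^{\ast}$ on $\C$ and for $\dim_k\operatorname{Soc}(\bar R)=1$).
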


\begin{proof}
Take a Cohen-Macaulay approximation of the residue field $k$ as $X$;
\begin{equation}\label{CM approx.}
0 \to Y \to X \to k \to 0.
\end{equation}
Then we shall show $X$ satisfies the assertion of the lemma. 

Let $M$ be a non free Cohen-Macaulay module, that is, $\ulM \not= \underline{0}$ in $\SCMR$. 
Apply $\HomR (M, -)$ to the sequence (\ref{CM approx.}), we have the commutative diagram with exact rows, where the vertical arrows are natural surjections;  
$$
\begin{CD}
@. @.   \SHomR (M, X) @>>> \SHomR (M, k)@>>> 0   \\ 
@. @.@AAA @ AAA  @. \\ 
0 @>>> \HomR (M, Y)  @>>> \HomR (M, X) @>>> \HomR (M, k) @>>> 0.  \\
\end{CD}
$$
Assuming $\SHomR (M, X) = 0$, we have $\SHomR (M, k) = 0$ from this diagram. 
On the other hand, since $\SHomR (M, k) = \Tor _R ^1 (\tr M, k)$ (\cite[Lemma 3.9]{Y90}), this implies that $\tr M$ is free (\cite[\S 19 Lemma 1(i)]{M89}), so that $M$ is free. 
This is a contradiction.   
\end{proof}

The stable category $\SCMR$ has a structure of a triangulated category since $R$ is Gorenstein (cf. \cite{Ha88}). 
By the definition of a triangle, $ \ulL \to \ulM \to \ulN \to \ulL [1]$ is a triangle in $\SCMR$ if and only if there is an exact sequence $0 \to L \to M' \to N \to 0$  in  $\CMR$ with $\ulM ' \cong \ulM$ in $\SCMR$. 
To prove our theorem, we use a theory of Auslander-Reiten (abbr. AR) triangles. 
The notion of AR triangles is a stable analogy of AR sequences.

\begin{definition}\cite[Chapter I, \S 4]{Ha88} 
We say that a triangle $\ulZ \to \ulY \xrightarrow{\ulf} \ulX \xrightarrow{\ulw} \ulZ [1] $ in $\SCMR$ is an AR triangle ending in $\ulX$ (or starting from $\ulZ$) if it satisfies
\begin{itemize}
\item[(1)] $\ulX$ and $\ulZ$ are indecomposable. 

\item[(2)] $\ulw \not=0$. 

\item[(3)] If $\ulg : \ulW \to \ulX$ is not a split epimorphism, then there exists $\ulh : \ulW \to \ulY$ such that $\ulg = \ulf \circ \ulh$.  
\end{itemize}
\end{definition}

\begin{remark}\label{AR triangle}
Let $0 \to Z \to Y \xrightarrow{f} X \to 0 $ be an AR sequence in $\CMR$. 
Then $\ulZ \to \ulY \xrightarrow{\ulf} \ulX \to \ulZ [1] $ is an AR triangle in $\SCMR$. 
See \cite[Proposition 2.2]{Hi15} for example. 
\end{remark}

We say that $(R, \m)$ is an isolated singularity if each localization $R_{\p }$ is regular for each prime ideal $\p$ with $\p \not= \m$. 
Note that if $R$ is an isolated singularity, $\CMR$ admits AR sequences (cf. \cite[Theorem 3.2]{Y90}). 
Hence $\SCMR$ admits AR triangles (Remark \ref{AR triangle}). 
We also note that since we have the isomorphism $\SHomR (M, N) \cong \Tor _1 ^R (\tr M, N)$ for finitely generated $R$-modules $M$ and $N$, one can show that $\mathrm{length} _R (\SHomR (M, N) )$ is finite for $M$, $N \in \SCMR$ if $R$ is an isolated singularity. 
When $U$ is indecomposable in $\CMR$ then denote by $\mu (\ulU, \ulX)$ the multiplicity of $\ulU$ as a direct summand of $\ulX$.

\begin{proposition}\label{prop of AR triangles}\cite[Proposition 3.1]{W13}\cite[Proposition 2.14 (1)]{Hi15}
Let $R$ be an isolated singularity and let $\ulZ \xrightarrow{\ulg} \ulY \xrightarrow{\ulf} \ulX \to \ulZ [1]$ be an AR triangle in $\SCMR$. 
Then the following equality holds for each indecomposable $U \in \CMR$: 
$$
[\ulU, \ulX ] + [\ulU, \ulZ ] - [\ulU, \ulY ] = \mu (\ulU, \ulX) + \mu (\ulU, \ulX [-1]). 
$$
Here $[\ulU, \ulX] $ is an abbreviation of $\mathrm{length} _R (\SHomR (M, N) )$. 
\end{proposition}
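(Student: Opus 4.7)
The plan is to apply the cohomological functor $\SHomR(\ulU, -)$ to the AR triangle $\ulZ \to \ulY \to \ulX \to \ulZ [1]$, extract an alternating-length identity from the resulting long exact sequence in the stable category, and then identify each of the two connecting-map images by invoking property~(3) of AR triangles---once for the given triangle and once for its shift by $[-1]$. Since $R$ is an isolated singularity, every stable Hom has finite $R$-length, so the long exact sequence
\begin{equation*}
\cdots \to \SHomR(\ulU, \ulX [-1]) \xrightarrow{\alpha} \SHomR(\ulU, \ulZ) \xrightarrow{\ulg_*} \SHomR(\ulU, \ulY) \xrightarrow{\ulf_*} \SHomR(\ulU, \ulX) \xrightarrow{\ulw_*} \SHomR(\ulU, \ulZ [1]) \to \cdots
\end{equation*}
consists of finite-length modules, and routine bookkeeping of kernels and images along the five displayed terms yields
\begin{equation*}
[\ulU, \ulX] + [\ulU, \ulZ] - [\ulU, \ulY] \;=\; \mathrm{length}_R\,\mathrm{Im}(\ulw_*) + \mathrm{length}_R\,\mathrm{Im}(\alpha).
\end{equation*}

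The first summand, I claim, equals $\mu(\ulU, \ulX)$. By exactness, $\ker \ulw_* = \mathrm{Im}\,\ulf_*$, the submodule of stable morphisms $\ulU \to \ulX$ that factor through $\ulf$. Property~(3) of AR triangles says every non-split epimorphism $\ulU \to \ulX$ factors through $\ulf$; conversely, if a map $\ulU \to \ulX$ factored through $\ulf$ and were a split epimorphism, then $\ulf$ itself would be a split epimorphism, forcing $\ulw = 0$ and contradicting property~(2). Since $\ulU$ and $\ulX$ are both indecomposable, a split epimorphism $\ulU \to \ulX$ can exist only when $\ulU \cong \ulX$, in which case any such map is an isomorphism, so
\begin{equation*}
\SHomR(\ulU, \ulX)/\mathrm{Im}\,\ulf_* \;\cong\; \begin{cases} \SEnd_R(\ulU)/\mathrm{rad}\,\SEnd_R(\ulU) & \text{if } \ulU \cong \ulX, \\ 0 & \text{otherwise}. \end{cases}
\end{equation*}
Because $\SEnd_R(\ulU)$ is a module-finite local $R$-algebra and $k$ is algebraically closed, its residue field is exactly $k$, of $R$-length one; the displayed quotient therefore has length $\mu(\ulU, \ulX)$ in either case.

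The second summand is handled by an almost identical argument applied to a shifted triangle. Since $[-1]$ is an auto-equivalence of the triangulated category $\SCMR$, the triangle
\begin{equation*}
\ulZ [-1] \xrightarrow{\ulg [-1]} \ulY [-1] \xrightarrow{\ulf [-1]} \ulX [-1] \xrightarrow{\ulw [-1]} \ulZ
\end{equation*}
is again an AR triangle, this time ending in the indecomposable $\ulX [-1]$, and $\alpha$ is---up to sign---precisely the connecting morphism of this shifted triangle. Repeating the preceding paragraph verbatim now gives $\mathrm{length}_R\,\mathrm{Im}(\alpha) = \mu(\ulU, \ulX [-1])$, and substituting into the length identity finishes the proof. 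The most delicate step in the plan is the identification of $\mathrm{Im}\,\ulf_*$ with the set of non-split epimorphisms: the forward inclusion is immediate from property~(3), but the reverse uses the non-triviality $\ulw \neq 0$, and it is only when one combines this with the algebraic-closedness of $k$---which forces the residue field of $\SEnd_R(\ulU)$ to be $k$ itself rather than a proper finite extension---that the right-hand side lands exactly on $\mu(\ulU, \ulX)$.
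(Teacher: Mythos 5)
Your proof is correct and is essentially the standard argument given in the cited sources [W13, Proposition 3.1] and [Hi15, Proposition 2.14(1)] (the paper itself only cites these and supplies no proof): apply the cohomological functor $\SHomR (\ulU , -)$ to the triangle and its shift, and use property (3) together with $\ulw \neq 0$ to identify the cokernel of $\ulf _{\ast}$ with the top of $\SHomR (\ulU , \ulX )$, whose length is $\mu (\ulU , \ulX )$ since $k$ is algebraically closed. Nothing further is needed.
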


\noindent
{\it Proof of Theorem \ref{Main theorem}. } 
Let $X$ be the module that satisfies the conditions as in Lemma \ref{key}. 
Take the syzygy of $X$. 
$$
0 \to \Omega X \to P \to X \to 0. 
$$
By the assumption, since $\EXR = \ARR$, we have the equality in $\GR$, 
$$
[X] + [\Omega X] - [P] = \sum _{M \in \mathrm{ind} \CMR }^{finite} a_{M, X}([M] + [\tau M] - [E_M]), 
$$
where $[M] + [\tau M] - [E_M]$ come from AR sequences $0 \to \tau M \to E_M \to M \to 0$. 
The equality yields that
\begin{equation}\label{equation}
[\ulU, \ulX \oplus \underline{\Omega X}] = \sum _{M \in \mathrm{ind} \CMR }^{finite} a_{M, X}([\ulU, \ulM] + [\ulU, \underline{\tau M}] - [\ulU, \underline{E_M}])
\end{equation}
for each $U \in \CMR$. 
Since $\underline{\tau M} \to \underline{E_M} \to \ulM \to \underline{\tau M} [1]$ are AR triangles (Remark \ref{AR triangle}), by Proposition \ref{prop of AR triangles}, we see that there are only a finite number of indecomposable modules in $\CMR$ that makes the RHS in (\ref{equation}) non-zero, so is LHS. 
By Lemma \ref{key}, we conclude that $\SCMR$, hence $\CMR$ is of finite representation type. 
\qed

\section*{Acknowledgments}
The author express his deepest gratitude to Osamu Iyama and Yuji Yoshino for valuable discussions and helpful comments. 
The author also thank the referee for his/her careful reading.

\end{document}